\newcommand{\lmps}{\longmapsto}
\newcommand{\ra}{\rightarrow}
\newcommand{\mi}{\setminus}
\newcommand{\lra}{\longrightarrow}
\newcommand{\mar}{\mathcal{R}}
\newcommand{\conggr}{\cong_{\mathrm{gr}}}
\newcommand{\gr}{\mathrm{gr}}
\newcommand{\Pgr}{\mathcal P \mathrm{gr}}
\newcommand{\hygrmod}{\textrm{-}\mathfrak{gr}\textrm{-}\mathcal{M}\mathfrak{od}}
\mathchardef\mhyphen="2D
\newcommand{\va}{\varphi}
\newcommand{\Ga}{\Gamma}
\newcommand{\ga}{\gamma}
\newcommand{\al}{\alpha}
\newcommand{\de}{\delta}
\newcommand{\la}{\lambda}
\newcommand\CK[1][1]{\operatorname{CK}_{#1}}
\newcommand\ZK[1][1]{\operatorname{ZK}_{#1}} 
\newcommand\SK[1][1]{\operatorname{SK}_{#1}} 
\newcommand{\End}{\operatorname{End}}
\newcommand{\Hom}{\operatorname{Hom}}
\newcommand{\id}{\operatorname{id}}
\newcommand{\op}{\operatorname{op}}
\newcommand{\HOM}{\operatorname{HOM}}
\newcommand{\GL}{\operatorname{GL}}
\theoremstyle{plain}
\newtheorem{thm}{Theorem}[section]
\newtheorem{cor}[thm]{Corollary}
\newtheorem{prop}[thm]{Proposition}
\theoremstyle{remark}
\newtheorem{remark}[thm]{Remark}
\newtheorem{example}[thm]{Example}
\theoremstyle{definition}
\newtheorem{defin}[thm]{Definition}
\title{On Graded Simple Algebras}
\author[Roozbeh Hazrat]{Roozbeh Hazrat}
\author[Judith R. Millar]{Judith R. Millar}
\address{
Department of Pure Mathematics\\
Queen's University\\
Belfast BT7 1NN\\
U.K.}
\email[Roozbeh Hazrat]{r.hazrat@qub.ac.uk}
\email[Judith R. Millar]{jmillar12@qub.ac.uk}
\subjclass[2000]{16H05, 19D99}
\keywords{Graded rings, graded simple rings, Azumaya algebras, $K$-theory}
\begin{document}

\begin{abstract}

This note begins by observing that a graded central simple algebra,
graded by an abelian group, is a graded Azumaya algebra and it is
free over its centre. For a graded Azumaya algebra $A$ free over its
centre $R$, we show that $K_i^{\gr} (A)$ is ``very close'' to
$K_i^{\gr}(R)$, where $K_i^{\gr} (R)$ is defined to be $K_i( \Pgr
(R))$. Here $\Pgr (R)$ is the category of graded finitely generated
projective $R$-modules and $K_i, \,i\geq 0$, are the Quillen
$K$-groups.
\end{abstract}

\maketitle

\section{Introduction}

Let $R$ be a commutative ring and $A$ be an algebra over $R$ which
is finitely generated as an $R$-module. If for any maximal ideal $m$
of $R$, the algebra $A\otimes_R R/m$ is a central simple
$R/m$-algebra, then $A$ is called an Azumaya algebra. This is
equivalent to saying that $A$ is a faithfully projective $R$-module,
and the natural $R$-algebra homomorphism $A\otimes_R A^{\op}  \ra
\End_R(A)$ is an isomorphism
(see~\cite[Thm.~\small{III}.5.1.1]{knus}). In \cite{hm} it was
proven that for an Azumaya algebra $A$ free over its centre $R$ of
rank $n$, the Quillen $K$-groups of $A$ are isomorphic to the
$K$-groups of its centre up to $n$-torsion, i.e.,
\begin{equation} \label{propp}
K_i(A) \otimes \mathbb Z[1/n] \cong K_i(R) \otimes \mathbb Z[1/n].
\end{equation}
Boulagouaz \cite[Prop.~5.1]{boulag} and Hwang and Wadsworth
\cite[Cor.~1.2]{hwcor} observed that a graded central simple
algebra, graded by a torsion free abelian group, is an Azumaya
algebra; thus its $K$-theory can be estimated by the above result.


This note studies graded central simple algebras graded by an
arbitrary abelian group. We observe that a graded central simple
algebra, graded by an abelian group, is a (graded) Azumaya algebra
(Theorem~\ref{gcsaazumayaalgebra}), which extends the result of
\cite{boulag,hwcor} to graded rings in which the grade group is not
totally ordered. Thus its $K$-theory can also be estimated by
(\ref{propp}). We then study the graded $K$-theory of graded Azumaya
algebras. We introduce an abstract functor called a graded $\mathcal
D$-functor defined on the category of graded Azumaya algebras over a
commutative graded ring $R$ (Definition~\ref{gradeddfunctor}), and
show that the range of this functor is the category of bounded
torsion abelian groups (Theorem~\ref{grtorsionthm}). We then prove
that the kernel and cokernel of the $K$-groups are graded $\mathcal
D$-functors, which allows us to show that, for a graded Azumaya
algebra $A$ free over $R$, we have a relation similar to
\eqref{propp} in the graded setting (see
Theorem~\ref{grazumayafreethm}).

This note is organised as follows. We begin
Section~\ref{sectiongcsa} by recalling some definitions, many of
which can be found in \cite{hwcor, grrings}, though not always in
the generality that we require. We then study graded central simple
algebras graded by an arbitrary abelian group and observe that they
are Azumaya algebras (Theorem~\ref{gcsaazumayaalgebra}). In order to
do so, we need to rewrite the standard results from the literature
in the setting of arbitrary graded rings. We observe that the tensor
product of two graded central simple $R$-algebras is graded central
simple (Propositions~\ref{tensorgradedsimple} and
\ref{tensorgradedcentral}). This result has been proven by Wall  for
$\mathbb Z / 2 \mathbb Z$ -graded central simple algebras (see
\cite[Thm.~2]{wall}), and by Hwang and Wadsworth for $R$-algebras
with a totally ordered, and hence torsion-free, grade group (see
\cite[Prop.~1.1]{hwcor}).

In Section~\ref{sectiongrdfunctors} we study the graded $K$-theory
of graded Azumaya algebras by introducing an abstract functor called
a graded $\mathcal D$-functor, which is defined on the category of
graded Azumaya algebras over a commutative graded ring $R$
(Definition~\ref{gradeddfunctor}). Similar concepts have been
studied in \cite{sk12001, azumayask1, hm}, where functors have been
defined on the category of central simple algebras and the category
of Azumaya algebras.


\section{Graded Central Simple Algebras}\label{sectiongcsa}

We begin this section by recalling some basic definitions in the
graded setting. A unital ring $R = \bigoplus_{ \ga \in \Ga} R_{\ga}$
is called a \emph{graded ring} if $\Ga$ is a group, each $R_{\ga}$
is a subgroup of $(R, +)$ and $R_{\ga} \cdot R_{\delta} \subseteq
R_{\ga + \delta}$ for all $\ga, \delta \in \Ga$. We remark that
although $\Ga$ is initially an arbitrary group which is not
necessarily abelian, we will write $\Ga$ as an additive group.
The elements of $R_\ga$ are called \emph{homogeneous of degree
$\ga$} and we write deg$(x) = \ga$ if $x \in R_{\ga}$. We set
\begin{flalign*}
&\Ga_{R} = \big \{ \ga \in \Ga : R_{\ga} \neq \{0 \} \big \}, \;
\textrm{ the support (or grade set) of $R$, }\\
&\Ga_{R}^* = \big \{ \ga \in \Ga : R_{\ga}^* \neq \emptyset \big \},
\textrm{ the support of invertible homogenous elements of $R$}\\
\textrm{ and \; }& R^{h} = \bigcup_{\ga \in \Ga_{R}} R_{\ga},
\;\;\;\;\; \textrm{ the set of homogeneous elements of
$R$.}\end{flalign*} Here $R^*$ is the set of invertible elements of $R$.
Note that  the support of $R$ is not necessarily
a group, and that $1_R$ is homogeneous of degree zero. An ideal $I$
of $R$ is called a \emph{graded ideal} if
$$
I= \bigoplus_{ \ga \in \Gamma} (I \cap R_{\ga}).
$$
Let $S= \bigoplus_{ \ga \in \Ga'} S_{\ga}$ be another graded ring
and suppose there is a group $\Delta$ containing $\Ga$ and $\Ga '$
as subgroups. The graded ring $R$ can be written as $R = \bigoplus_{
\ga \in \Delta} R_{\ga}$ with $R_\ga = 0$ if $\ga \in \Delta \mi
\Ga_R$, and similarly for $S$. Then a \emph{graded ring
homomorphism} $f:R \ra S$ is a ring homomorphism such that
$f(R_{\ga}) \subseteq S_{\gamma}$ for all $\ga \in \Delta$. If $f$
is bijective, then $f$ is a \emph{graded isomorphism}. A graded ring
$R$ is said to be {\em graded simple} if the only graded two-sided
ideals of $R$ are $\{ 0 \}$ and $R$. A graded ring $D = \bigoplus_{
\ga \in \Ga} D_{\ga}$ is called a \emph{graded division ring} if
every non-zero homogeneous element has a multiplicative inverse,
where it follows easily that $\Ga_D$ is a group.

We say that a group $(\Ga , +)$  {\em acts freely} (as a left
action) on a set $\Ga'$ if for all $\ga$, $\ga' \in \Ga$, $\de \in
\Ga'$, we have $\ga + \de = \ga' + \de$ implies $\ga = \ga'$, where
$\ga + \de$ denotes the image of $\de$ under the action of $\ga$. A
\emph{graded left $R$-module} $M$ is defined to be an $R$-module
with a direct sum decomposition $M=\bigoplus_{\ga \in \Ga'}
M_{\ga}$, where $M_{\ga}$ are abelian groups and $\Ga$ acts freely
on the set $\Ga'$, such that $R_{\ga} \cdot M_{\la} \subseteq M_{\ga
+ \la}$ for all $\ga \in \Ga_R, \la \in \Ga'$. {\it From now on,
unless otherwise stated, a graded module will mean a graded left
module.} A graded $R$-module $M$ is said to be {\em graded simple}
if the only graded submodules of $M$ are $\{ 0 \}$ and $M$, where
graded submodules are defined in the same way as graded ideals. A
\emph{graded free $R$-module} $M$ is defined to be a graded
$R$-module which is free as an $R$-module with a homogeneous base.

Let $N= \bigoplus_{\ga \in \Ga''} N_{\ga}$ be another graded
$R$-module, such that there is a group $\Delta$ containing $\Ga'$
and $\Ga''$ as subgroups, where $\Ga$ acts freely on $\Delta$. A
\emph{graded $R$-module homomorphism} $f:M \ra N$ is an $R$-module
homomorphism such that $f(M_{\de}) \subseteq N_{\de}$ for all $\de
\in \Delta$. Let $\Hom_{R \hygrmod}(M,N)$ denote the group of graded
$R$-module homomorphisms, which is an additive subgroup of $\Hom_R
(M,N)$. A graded $R$-module homomorphism may also shift the grading
on $N$. For each $\de \in \Delta$, we have a subgroup of
$\mathrm{Hom}_R(M,N)$ of $\de$-shifted homomorphisms
$$ {\mathrm{Hom}_R(M,N)}_{\de} = \{ f \in \mathrm{Hom}_R(M,N) :
f(M_{\ga}) \subseteq N_{\ga + \de} \textrm{ for all } \ga \in \Delta
\}. $$  Let $\mathrm{HOM}_R(M,N) = \bigoplus_{\de \in \Ga}
\mathrm{Hom}_R(M,N)_{\de}.$ For some $\de \in \Delta$, we define the
$\de$-shifted $R$-module $M(\de)$ as $M(\de) =\bigoplus_{\ga \in
\Delta} M(\de)_\ga$ where $M(\de)_\ga = M_{\ga+\de}$. Then
$$\Hom_R(M,N)_\de = \Hom_{R \hygrmod}(M,N(\de)) =
\Hom_{R \hygrmod}(M(-\de),N).$$ If $M$ is finitely generated, then
$\HOM_R(M,N) = \Hom_R(M,N)$ (see \cite[Cor.~2.4.4]{grrings}). Note
that $R(\de)\cong_{\gr} R$ as graded $R$-modules if and only if $\de
\in \Gamma_R^*$.

In the following Proposition we are considering graded modules over
graded division rings. We note that the grade groups here are
defined as above; that is, we do not initially assume them to be
abelian or torsion-free.

\begin{prop}\label{gradedfree}
Let $\Ga$ be a group which acts freely on a set $\Ga'$. Let $R =
\bigoplus_{ \ga \in \Ga} R_{\ga}$ be a graded division ring and
$M=\bigoplus_{\ga \in \Ga'} M_{\ga}$ be a graded module over $R$.
Then $M$ is a graded free $R$-module. More generally, any linearly
independent subset of $M$ consisting of homogeneous elements can be
extended to form a homogeneous basis of $M$. Furthermore, any two
homogenous bases have the same cardinality and if $N$ is
a graded submodule of $M$, then
\begin{equation}\label{dimensioneqn}
\dim_R (N) + \dim_R(M/N) = \dim_R (M).
\end{equation}
\end{prop}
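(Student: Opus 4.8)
The plan is to transfer the classical theory of modules over a division ring to the graded setting; the one genuinely new ingredient is that the \emph{free} action of $\Ga$ on $\Ga'$ is exactly what lets us decouple an arbitrary $R$-linear relation into homogeneous pieces. I first record this reduction. Given homogeneous elements $m_i \in M_{\de_i}$ and an $R$-linear relation $\sum_i r_i m_i = 0$, I decompose each $r_i$ into its homogeneous components and sort the resulting terms $(r_i)_\ga m_i \in M_{\ga + \de_i}$ by their degree in $\Ga'$. Because $\Ga$ acts freely on $\Ga'$, for each fixed $i$ the map $\ga \mps \ga + \de_i$ is injective, so every $m_i$ contributes at most one term to a given degree $\ep \in \Ga'$; collecting the degree-$\ep$ part and using the directness of $M = \bigoplus_\ep M_\ep$, the relation splits into a family of homogeneous relations $\sum_i s_i m_i = 0$ (one per degree, each $s_i$ homogeneous of the degree forcing $s_i m_i \in M_\ep$). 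Hence a set of homogeneous elements is $R$-linearly independent if and only if it admits no nontrivial homogeneous relation. This is the step I expect to be the crux: without freeness, two distinct components $(r_i)_\ga, (r_i)_{\ga'}$ of the \emph{same} coefficient could land in one degree and the decoupling would fail.

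Given a linearly independent set $S$ of homogeneous elements, I then apply Zorn's lemma to the poset of linearly independent sets of homogeneous elements containing $S$ (independence being a finitary condition, unions of chains stay independent) to obtain a maximal such set $B$. To see $B$ spans $M$ it suffices to span each homogeneous $m \in M_\de$. If $m \notin \langle B \rangle$, I claim $B \cup \{m\}$ is still independent: by the reduction above I need only test a homogeneous relation $s m + \sum_i s_i b_i = 0$; if $s \neq 0$ it is invertible (this is where ``graded division ring'' enters), giving $m = -s^{-1}\sum_i s_i b_i \in \langle B\rangle$, a contradiction, so $s = 0$ and then each $s_i = 0$ by independence of $B$. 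This contradicts maximality, so $B$ is a homogeneous basis extending $S$; taking $S = \emptyset$ shows $M$ is graded free.

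For equicardinality of bases I run Steinitz exchange in homogeneous form: given a finite homogeneous basis $\{b_1,\dots,b_n\}$ and a homogeneous independent set $\{c_1,\dots,c_m\}$, I expand $c_1$ homogeneously, observe some coefficient is a nonzero hence invertible homogeneous scalar, and swap the corresponding $b_j$ for $c_1$; iterating yields $m \le n$, so any two finite bases have equal size. When a basis is infinite I use the usual counting argument: each vector of one basis has finite support in the other, these supports cover the whole basis (otherwise independence is violated), and comparing cardinalities in both directions gives equality. Beyond set theory the only facts used are the homogeneous reduction and invertibility of nonzero homogeneous elements.

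Finally, for a graded submodule $N \subseteq M$ I apply the existence/extension result twice: choose a homogeneous basis $B_N$ of $N$ and extend it to a homogeneous basis $B_M$ of $M$. I then verify that the images of $B_M \mi B_N$ form a homogeneous basis of the graded quotient $M/N$ (whose support is a subset of $\Ga'$, on which $\Ga$ still acts freely): they plainly generate, and any relation among them lifts to an $R$-combination of $B_M \mi B_N$ lying in $N = \langle B_N\rangle$, which by independence of $B_M$ forces all coefficients to vanish. Thus $B_M$ partitions into $B_N$ and a basis of $M/N$, and counting the partition gives $\dim_R (N) + \dim_R(M/N) = \dim_R (M)$, completing the proof.
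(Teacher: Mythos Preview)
Your argument is correct and is precisely the standard vector-space proof the paper invokes by reference to \cite{hungerford,boulaggradedandtame,hwcor,grrings}; in particular, your opening reduction---using freeness of the $\Ga$-action on $\Ga'$ to split an arbitrary $R$-relation among homogeneous elements into homogeneous relations---is exactly the ``extra care'' the paper alludes to but does not spell out. The remaining steps (Zorn, Steinitz exchange, the quotient-basis argument) are the expected graded transcriptions and go through as you describe.
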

\begin{proof}
The proof follows the standard proof in the non-graded setting (see for example
\cite[Thms.~\small{IV}.2.4,~2.7,~2.13]{hungerford}), or the graded
setting (see \cite[Thm.~3]{boulaggradedandtame},
\cite[p.~79]{hwcor}, \cite[Prop.~4.6.1]{grrings}); however extra
care needs to be given since the grading is neither abelian nor
torsion free.
\end{proof}

A \emph{graded field} $R = \bigoplus_{ \ga \in \Ga} R_{\ga}$ is
defined to be a commutative graded division ring. Note that the
support of a graded field is an abelian group. Let $\Ga'$ be another
group such that there is a group $\Delta$ containing $\Ga$ and
$\Ga'$ as subgroups. A \emph{graded $R$-algebra} $A= \bigoplus_{\ga
\in \Ga'} A_{\ga}$ is a graded ring which is an $R$-algebra such
that the associated ring homomorphism $\varphi :R \ra Z(A)$ is a
graded homomorphism. A graded algebra $A$ over $R$ is said to be a
{\it graded central simple algebra} over $R$ if $A$ is a graded
simple ring, $Z(A) =R$, and $[A:R] < \infty$. Note that since the
centre of $A$ is a graded field, by Proposition~\ref{gradedfree},
$A$ is graded free over its centre, so the dimension of $A$ over $R$
is uniquely defined.

Let $A= \bigoplus_{\ga \in \Ga'} A_{\ga}$ and $B= \bigoplus_{\ga \in
\Ga''} B_{\ga}$ be graded $R$-algebras, such that there is a group
$\Delta$ containing $\Ga'$ and $\Ga''$ as subgroups with $\Ga'
\subseteq Z_{\Delta} (\Ga'')$, where $Z_{\Delta} (\Ga'')$ is the set
of elements of $\Delta$ which commute with $\Ga''$.  Then $A
\otimes_R B$ has a natural grading as a graded $R$-algebra given by
$A \otimes_R B = \bigoplus_{\ga \in \Delta} (A \otimes_R B)_{\ga}$
where:
$$
(A \otimes_R B)_{\ga} = \left\{ \sum_i a_i \otimes b_i : a_i \in
A^h, b_i \in B^h, \deg(a_i)+\deg(b_i) = \ga \right\}
$$
Note that the condition $\Ga' \subseteq Z_{\Delta} (\Ga'')$ is
needed to ensure that the multiplication on $A \otimes_R B$ is well
defined. Moreover, for the following Proposition, we require that
the group $\Delta$ is an abelian group.

For a graded ring $A= \bigoplus_{\ga \in \Ga'} A_{\ga}$, let
$A^{\op}$ denote the opposite graded ring, where the grade group of
$A^{\op}$ is the opposite group $\Ga'^{\op}$. So, for a graded
$R$-algebra $A$, in order to define $A \otimes_R A^{\op}$, we note
that the grade group of $A$ must be abelian. Thus we will now assume
that for a graded $R$-algebra $A= \bigoplus_{\ga \in \Ga'} A_{\ga}$,
the group $\Ga'$ is in fact an abelian group.

By combining Propositions \ref{tensorgradedsimple} and
\ref{tensorgradedcentral}, we show that the tensor product of two
graded central simple $R$-algebras is graded central simple, where
the grade groups $\Ga'$ and $\Ga''$, as below, are abelian but not
necessarily torsion-free. This has been proven by Wall for graded
central simple algebras with $\mathbb Z / 2 \mathbb Z$ as the
support (see \cite[Thm.~2]{wall}), and by Hwang and Wadsworth for
$R$-algebras with a torsion-free grade group (see
\cite[Prop.~1.1]{hwcor}).

\begin{prop}\label{tensorgradedsimple}

Let $\Ga$, $\Ga'$ and $\Ga''$ be abelian groups such that there is
an abelian group $\Delta$ containing $\Ga$, $\Ga'$ and $\Ga''$ as
subgroups. Let $R  = \bigoplus_{ \ga \in \Ga} R_{\ga}$ be a graded
field and let $A= \bigoplus_{\ga \in \Ga'} A_{\ga}$ and $B=
\bigoplus_{\ga \in \Ga''} B_{\ga}$ be graded $R$-algebras. If $A$ is
graded central simple over $R$ and $B$ is graded simple, then $A
\otimes_R B$ is graded simple.
\end{prop}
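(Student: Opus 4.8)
The plan is to adapt the classical proof that $A\otimes_R B$ is simple whenever $A$ is central simple and $B$ is simple, but carried out entirely with homogeneous elements so that everything stays graded. First I would take a nonzero graded two-sided ideal $I$ of $A\otimes_R B$ and aim to show $I = A\otimes_R B$ by producing $1\otimes 1 \in I$. Pick a nonzero homogeneous element $z \in I$ and write it, using a homogeneous $R$-basis of $B$, in the form $z = \sum_{i=1}^{n} a_i \otimes b_i$ with the $b_i$ linearly independent homogeneous elements of $B$ and each $a_i \in A^h$ nonzero; here I would invoke Proposition~\ref{gradedfree} to guarantee such a homogeneous basis exists and that homogeneous linearly independent sets behave as expected. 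Among all nonzero homogeneous elements of $I$ I choose $z$ with $n$ minimal.

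The key step is to show that minimality forces $n=1$. Since $A$ is graded central simple, the graded two-sided ideal of $A$ generated by the homogeneous element $a_1$ is all of $A$, so there exist homogeneous $x_j, y_j \in A$ with $\sum_j x_j a_1 y_j = 1$. Replacing $z$ by $\sum_j (x_j\otimes 1)\, z\, (y_j \otimes 1)$, which again lies in $I$ and is homogeneous (because the $x_j, y_j$ can be taken homogeneous and we sum within a fixed degree), I may assume $a_1 = 1$. Now for any homogeneous $a \in A$, the element $(a\otimes 1)z - z(a\otimes 1)$ lies in $I$, is homogeneous, and has its first term vanish since $a_1=1$ commutes with $a$; by minimality of $n$ this element must be zero, which shows each $a_i$ commutes with every homogeneous $a \in A$. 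Hence each $a_i \in Z(A)^h = R^h$, and absorbing these central homogeneous scalars into the $b_i$ lets me rewrite $z = 1 \otimes b$ for a single nonzero homogeneous $b \in B$.

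Finally I descend to $B$: the homogeneous element $b$ generates a nonzero graded two-sided ideal of $B$, and since $B$ is graded simple that ideal is all of $B$, so there are homogeneous $u_k, v_k \in B$ with $\sum_k u_k b v_k = 1$. Then $\sum_k (1\otimes u_k)\, z\, (1\otimes v_k) = 1\otimes 1 \in I$, forcing $I = A\otimes_R B$ and establishing graded simplicity.

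The main obstacle I anticipate is purely bookkeeping in the graded setting: I must verify at each stage that the elements I produce are genuinely homogeneous of a single degree, so that ``minimal $n$'' is taken among homogeneous elements and the commutator argument stays inside $I$ as a homogeneous element. The delicate point is that $x_j a_1 y_j$ and the various products need their degrees to match up; because $\Delta$ is abelian (as explicitly assumed) and multiplication respects the grading, one can always restrict attention to the homogeneous component of the appropriate degree, and the commutativity of $\Delta$ is exactly what guarantees the degrees add consistently. The one genuinely graded input beyond the classical argument is the fact that $Z(A)^h = R^h$, i.e.\ that a homogeneous element commuting with all homogeneous elements of $A$ is central; this follows from $Z(A) = R$ together with the decomposition of the centre into homogeneous components.
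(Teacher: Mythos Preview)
Your argument is correct and follows the same classical ``minimal-length'' strategy as the paper's proof: pick a nonzero homogeneous element of $I$ of shortest length, normalise one $A$-coefficient to $1$ via graded simplicity of $A$, use commutators $(a\otimes 1)z - z(a\otimes 1)$ together with minimality to force the remaining $A$-coefficients into $Z(A)=R$, and then finish via graded simplicity of $B$.

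The only organisational difference is that you fix the $b_i$ to be $R$-linearly independent homogeneous elements from the outset (via Proposition~\ref{gradedfree}), so that ``commutator is zero'' immediately yields ``all $a_i$ central''. The paper instead takes an arbitrary shortest homogeneous expression without assuming the $b_i$ independent, and then has to treat separately the case where the commutator vanishes, deriving a contradiction by showing a relation $1\otimes b_{k-1}=\sum_{j<k-1} x'_j\otimes b_j$ would violate $A$-linear independence of the $1\otimes b_i$. Your setup absorbs that case distinction and makes the argument a bit cleaner; the paper's version is closer to the ungraded proofs in the literature it is adapting. Both rely on the same graded inputs (abelian $\Delta$ so degrees add consistently, and freeness of $B$ over $R$ from Proposition~\ref{gradedfree}).
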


\begin{proof}
Let $I$ be a graded two-sided ideal of $A \otimes B$, with $I \neq
0$. We will show that $A \otimes B = I$. First suppose $a \otimes b$
is a homogeneous element of $I$, where $a \in A^h$ and $b \in B^h$.
Then $A$ is the graded two-sided ideal generated by $a$, so there
exist $a_i ,$ $a'_i \in A^h$ with $1= \sum a_i a a'_i$. Then
$$\sum (a_i \otimes 1)( a \otimes b)( a'_i \otimes 1) = 1 \otimes b$$ is an
element of $I$. Similarly, $B$ is the graded two-sided ideal
generated by $b$. Repeating the above argument shows that $1 \otimes
1$ is an element of $I$, proving $I= A\otimes B$ in this case.

Now suppose there is an element $x \in I^h$, where $x = a_1 \otimes
b_1 + \cdots + a_k \otimes b_k$, with $a_j \in A^h$, $b_j \in B^h$
and $k$ as small as possible. Note that since $x$ is homogeneous,
$\deg(a_j) +\deg (b_j) = \deg (x)$ for all $j$. By the above
argument we can suppose that $k
>1$. As above, since $a_k \in A^h$, there are $c_i ,$ $c'_i \in A^h$
with $1= \sum c_i a_k c'_i$. Then
$$
\sum (c_i \otimes 1)x( c'_i \otimes 1) = \left( \sum (c_i a_1
c'_i) \right) \otimes b_1 + \cdots + \left( \sum (c_i a_{k-1} c'_i)
\right) \otimes b_{k-1} + 1 \otimes b_k,
$$
where the terms $\left( \sum_i (c_i a_j c'_i)\right) \otimes b_j$
are homogeneous elements of $A \otimes B$. Thus, without loss of
generality, we can assume that $a_k = 1$. Then $a_k$ and $a_{k-1}$
are linearly independent, since if $a_{k-1} = \la a_k$ with $\la \in
R$, then $a_{k-1} \otimes b_{k-1} + a_k \otimes b_k = a_k \otimes
(\la b_{k-1} +b_k)$, which is homogeneous and thus gives a smaller
value of $k$.

Thus $a_{k-1} \notin R=Z(A)$, and so there is a homogeneous element
$a \in A$ with $a a_{k-1} - a_{k-1} a \neq 0$. Consider the
commutator $$(a \otimes 1) x - x (a \otimes 1) = (aa_1 - a_1 a )
\otimes b_1 + \cdots + (a a_{k-1} - a_{k-1} a)\otimes b_{k-1},$$
where the last summand is not zero. If the whole sum is not zero,
then we have constructed a homogeneous element in $I$ with a smaller
$k$. Otherwise suppose the whole sum is zero, and write $c = a
a_{k-1} - a_{k-1} a $. Then we can write $c \otimes b_{k-1} =
\sum_{j=1}^{k-2} x_j \otimes b_j$ where $x_j = -(a a_j -a_j a)$.
Since $0\not = c \in A^h$ and $A$ is the graded two-sided ideal
generated by $c$, using the same argument as above, we have
\begin{equation} \label{li}
1 \otimes b_{k-1} = x'_1 \otimes b_1 + \cdots + x'_{k-2} \otimes
b_{k-2}\end{equation} for some $x'_j \in A^h$. Since $b_1 , \ldots ,
b_{k-1}$ are linearly independent homogeneous elements of $B$, they
can be extended to form a homogeneous basis of $B$, say $\{b_i\}$,
by Proposition~\ref{gradedfree}. Then $\{ 1 \otimes b_i\}$ forms a
homogeneous basis of $A\otimes_R B$ as a graded $A$-module, so in
particular they are $A$-linearly independent, which is a
contradiction to equation \eqref{li}. This reduces the proof to the
first case.
\end{proof}

\begin{prop}\label{tensorgradedcentral}

Let $\Ga$, $\Ga'$ and $\Ga''$ be abelian groups such that there is
an abelian group $\Delta$ containing $\Ga$, $\Ga'$ and $\Ga''$ as
subgroups. Let $R$ be a graded field and let $A= \bigoplus_{\ga \in
\Ga'} A_{\ga}$ and $B= \bigoplus_{\ga \in \Ga''} B_{\ga}$ be graded
$R$-algebras. If $A' \subseteq A$ and $B' \subseteq B$ are graded
subalgebras, then
$$Z_{A \otimes B}(A' \otimes B') = Z_A (A') \otimes Z_B(B').$$
In particular, if $A$ and $B$ are central over $R$, then $A
\otimes_R B$ is central.
\end{prop}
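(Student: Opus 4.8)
The plan is to reduce the identity to two one–sided centralizer computations and then intersect. First I would note that since $A'$ and $B'$ are (unital) graded subalgebras, both $A' \otimes 1$ and $1 \otimes B'$ lie inside $A' \otimes B'$, while conversely every generator $a' \otimes b'$ factors as $(a' \otimes 1)(1 \otimes b')$. Consequently an element of $A \otimes B$ commutes with all of $A' \otimes B'$ if and only if it commutes with $A' \otimes 1$ and with $1 \otimes B'$ separately, so that
$$Z_{A \otimes B}(A' \otimes B') = Z_{A \otimes B}(A' \otimes 1) \cap Z_{A \otimes B}(1 \otimes B').$$

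Next I would establish $Z_{A \otimes B}(A' \otimes 1) = Z_A(A') \otimes B$, the inclusion $\supseteq$ being immediate. For $\subseteq$, use Proposition~\ref{gradedfree} to fix a homogeneous $R$-basis $\{b_i\}$ of $B$; then $A \otimes_R B = \bigoplus_i A \otimes b_i$ and every $z$ has a unique expansion $z = \sum_i a_i \otimes b_i$ with $a_i \in A$. Commuting $z$ with $a' \otimes 1$ and comparing coefficients of the $A$-linearly independent elements $1 \otimes b_i$ forces $a' a_i = a_i a'$ for all $a' \in A'$ (it suffices to test homogeneous $a'$, since $A'$ is spanned by homogeneous elements), whence each $a_i \in Z_A(A')$. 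A symmetric argument, using a homogeneous $R$-basis of $A$ and commuting with $1 \otimes b'$, yields $Z_{A \otimes B}(1 \otimes B') = A \otimes Z_B(B')$.

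It then remains to identify $(Z_A(A') \otimes B) \cap (A \otimes Z_B(B'))$ with $Z_A(A') \otimes Z_B(B')$, and this is the step where the graded-field hypothesis really matters, so I expect it to be the main subtlety. Here I would choose a homogeneous $R$-basis $\{u_k\}$ of $Z_A(A')$ and, invoking the extension property of Proposition~\ref{gradedfree}, enlarge it to a homogeneous $R$-basis $\{u_k\} \cup \{v_l\}$ of $A$. Relative to this basis every element of $A \otimes_R B$ expands uniquely as $\sum_k u_k \otimes w_k + \sum_l v_l \otimes w'_l$; membership in $Z_A(A') \otimes B$ forces all $w'_l = 0$, while membership in $A \otimes Z_B(B') = \bigoplus_k u_k \otimes Z_B(B') \oplus \bigoplus_l v_l \otimes Z_B(B')$ forces every coefficient, in particular each $w_k$, to lie in $Z_B(B')$. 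Hence the intersection is exactly $\bigoplus_k u_k \otimes Z_B(B') = Z_A(A') \otimes Z_B(B')$, which together with the first paragraph proves the stated formula.

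Finally, the ``in particular'' assertion follows by specialising to $A' = A$ and $B' = B$: since $A$ and $B$ are central over $R$ we have $Z_A(A) = R$ and $Z_B(B) = R$, so $Z(A \otimes_R B) = R \otimes_R R = R$, i.e. $A \otimes_R B$ is central.
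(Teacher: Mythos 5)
Your proof is correct and follows essentially the same route as the paper's: both rest on Proposition~\ref{gradedfree} to get homogeneous $R$-bases (and their extensions) and then compare coefficients in the unique expansion of an element of $A\otimes_R B$, first against $A'\otimes 1$ and then against $1\otimes B'$. The only cosmetic difference is that you package the second half as the module intersection $(Z_A(A')\otimes B)\cap(A\otimes Z_B(B'))$, whereas the paper re-expands $x$ directly in a homogeneous basis of $Z_A(A')$ and commutes with $1\otimes b'$; both hinge on the same basis-extension property.
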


\begin{proof}
First note that by Proposition~\ref{gradedfree}, $A',B',Z_A(A')$ and
$Z_B(B')$ are free over $R$, and thus one can consider $Z_{A \otimes
B}(A' \otimes B')$ and  $Z_A (A') \otimes Z_B(B')$ as subalgebras of
$A\otimes B$.

The inclusion $\supseteq$ follows immediately. For the reverse
inclusion, let $x \in Z_{A \otimes B}(A' \otimes B')$. Let $b_1 ,
\ldots , b_n$ be a homogeneous basis for $B$ over $R$ which exists
thanks to Proposition~\ref{gradedfree}. Then $x$ can be written
uniquely as $x = x_1 \otimes b_1 + \cdots + x_n \otimes b_n$ for
$x_i \in A$ (see \cite[Thm.~IV.5.11]{hungerford}). For every $a \in
A'$, $(a \otimes 1) x = x (a \otimes 1)$, so
$$ (a x_1) \otimes b_1 + \cdots + (a x_n) \otimes b_n   = (x_1 a)
\otimes b_1 + \cdots + (x_n a) \otimes b_n.$$ By the uniqueness of
this representation we have $x_i a = a x_i$, so that $x_i \in Z_A
(A')$ for each $i$.  Thus we have shown that $x \in Z_A(A')
\otimes_R B$. Similarly, let $c_1 , \ldots , c_k$ be a homogeneous
basis of $Z_A(A')$. Then we can write $x$ uniquely as $x = c_1
\otimes y_1 + \cdots + c_k \otimes y_k$ for $y_i \in B$. A similar
argument to above shows that $y_i \in Z_B(B')$, completing the
proof.
\end{proof}

\begin{thm}\label{gcsaazumayaalgebra}
Let $\Ga$ and $\Ga'$ be abelian groups such that there is an abelian
group $\Delta$ containing $\Ga$ and $\Ga'$ as subgroups. Let $A =
\bigoplus_{\ga \in \Ga'} A_{\ga}$ be a graded central simple algebra
over the graded field $R = \bigoplus_{\ga \in \Ga} R_{\ga}$. Then
$A$ is an Azumaya algebra over $R$.
\end{thm}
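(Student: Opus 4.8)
The plan is to verify the standard characterization of Azumaya algebras recalled in the introduction: to show that $A$ is a faithfully projective $R$-module and that the natural $R$-algebra homomorphism $\psi : A \otimes_R A^{\op} \to \End_R(A)$, given by $\psi(a \otimes a')(x) = a x a'$, is an isomorphism. Since $R$ is a graded field, Proposition~\ref{gradedfree} immediately gives that $A$ is graded free, hence free, of finite rank $n = [A:R]$ over $R$; as $1 \in A$ this rank is positive, so $A$ is faithfully projective. It then remains to prove that $\psi$ is bijective.

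First I would observe that $A^{\op}$ is again graded central simple over $R$: its grade group $\Ga'^{\op}$ equals $\Ga'$ since $\Ga'$ is abelian, its graded two-sided ideals coincide with those of $A$ (so it is graded simple), and $Z(A^{\op}) = Z(A) = R$. Because $\Delta$ is abelian, the condition $\Ga' \subseteq Z_{\Delta}(\Ga'^{\op})$ needed to form $A \otimes_R A^{\op}$ holds automatically, and Propositions~\ref{tensorgradedsimple} and~\ref{tensorgradedcentral} then show that $A \otimes_R A^{\op}$ is graded central simple, in particular graded simple.

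Next I would note that $\psi$ is a graded $R$-algebra homomorphism: a homogeneous element $a \otimes a'$ of degree $\deg(a)+\deg(a')$ is sent to a $(\deg(a)+\deg(a'))$-shifted endomorphism, so $\psi$ respects the grading on $\HOM_R(A,A)$. Hence $\ker \psi$ is a graded two-sided ideal of $A \otimes_R A^{\op}$; since $\psi(1 \otimes 1) = \id \neq 0$, graded simplicity forces $\ker \psi = 0$, and $\psi$ is injective. Because $A$ is finitely generated, $\End_R(A) = \HOM_R(A,A)$ is a graded $R$-module, and a dimension count over the graded field $R$ gives $\dim_R(A \otimes_R A^{\op}) = n^2 = \dim_R \End_R(A)$. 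Applying the dimension formula~\eqref{dimensioneqn} to the graded submodule $\im \psi \cong A \otimes_R A^{\op}$ of $\End_R(A)$ yields $\dim_R(\End_R(A)/\im\psi) = 0$, so $\psi$ is surjective and hence a graded, and in particular an ordinary ring, isomorphism.

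Combining these facts, $A$ is faithfully projective over $R$ and $\psi$ is an isomorphism, so $A$ is Azumaya over $R$ by the criterion quoted in the introduction. The computations are routine once the tensor-product results are available; the step requiring care is purely the graded bookkeeping — confirming that the endomorphism ring $\End_R(A)$ in the classical Azumaya criterion agrees with the graded ring $\HOM_R(A,A)$ (valid because $A$ is finitely generated) and that the rank and dimension arguments proceed over a graded field by means of Proposition~\ref{gradedfree} and~\eqref{dimensioneqn}, rather than over an ordinary field.
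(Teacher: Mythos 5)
Your proposal is correct and follows essentially the same route as the paper's proof: establish faithful projectivity from graded freeness over the graded field, deduce injectivity of $\psi$ from the graded simplicity of $A \otimes_R A^{\op}$ (via Proposition~\ref{tensorgradedsimple}), and conclude surjectivity by a dimension count using equation~\eqref{dimensioneqn}. The only difference is that you spell out the routine verifications (that $A^{\op}$ is graded simple, that $\ker\psi$ is a graded ideal, that $\End_R(A)=\HOM_R(A,A)$) which the paper leaves implicit.
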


\begin{proof}
Since $A$ is graded free of finite rank, it follows that $A$ is
faithfully projective over $R$. There is a natural graded
$R$-algebra homomorphism $\psi: A \otimes_R A^{\op} \ra \End_R (A)$
defined by $\psi(a\otimes b)(x)=axb$ where $a,x \in A$, $b \in
A^{\op}$. By Proposition~\ref{tensorgradedsimple}, the domain is
graded simple, so $\psi$ is injective. Hence the map is surjective
by dimension count, using equation~(\ref{dimensioneqn}). This shows
that $A$ is an Azumaya algebra over $R$, as required.
\end{proof}

For a graded field $R$, this theorem shows that a graded central
simple $R$-algebra, graded by an abelian group $\Ga'$, is an Azumaya
algebra over $R$. One can not extend the theorem to non-abelian
grading. Consider a finite dimensional division algebra $D$  and a
group $G$ and consider the group ring $DG$. This is clearly a graded
simple algebra (in fact a graded division ring) and if $G$ is
abelian the above theorem implies that $DG$ is an Azumaya algebra.
However in general, for an arbitrary group $G$,  $DG$ is not always
an Azumaya algebra. In fact DeMeyer and Janusz \cite{demjan} have
shown the following: the group ring $RG$ is an Azumaya algebra if
and only if $R$ is Azumaya, $[G:Z(G)] < \infty$ and $[G,G]$, the
commutator subgroup of $G$, has finite order $m$ and $m$ is
invertible in $R$.

\begin{cor}\label{gcsacor}
Let $\Ga$ and $\Ga'$ be abelian groups such that there is an abelian
group $\Delta$ containing $\Ga$ and $\Ga'$ as subgroups. Let $A =
\bigoplus_{\ga \in \Ga'} A_{\ga}$ be a graded central simple algebra
over its graded centre $R = \bigoplus_{\ga \in \Ga} R_{\ga}$ of
degree $n$. Then for any $i \geq 0$,
$$K_i(A) \otimes \mathbb Z[1/n] \cong K_i(R) \otimes \mathbb
Z[1/n].$$
\end{cor}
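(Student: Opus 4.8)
The plan is to combine Theorem~\ref{gcsaazumayaalgebra} with the non-graded result \eqref{propp} from \cite{hm}. By Theorem~\ref{gcsaazumayaalgebra}, a graded central simple $R$-algebra $A$ (with $R$, $\Ga'$ and $\Ga$ abelian as specified) is an Azumaya algebra over $R$. Moreover, the discussion preceding Proposition~\ref{tensorgradedsimple} establishes that, since $Z(A)=R$ is a graded field, $A$ is graded free over $R$ by Proposition~\ref{gradedfree}, with a well-defined dimension $[A:R]=n$. Thus $A$ is precisely an Azumaya algebra free over its centre $R$ of rank $n$, which is exactly the hypothesis under which \eqref{propp} was proved in \cite{hm}.

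The key step is therefore to verify that the rank of $A$ as an $R$-module, in the sense required by \cite{hm}, coincides with the graded dimension $n$ appearing in the statement. Since $A$ is graded free over $R$ with a homogeneous basis, this homogeneous basis is in particular an $R$-basis for $A$ in the ungraded sense, so the ungraded rank equals the graded dimension $n$. First I would record this observation, then invoke \eqref{propp} directly: for each $i\geq 0$,
\begin{equation*}
K_i(A)\otimes\mathbb Z[1/n]\cong K_i(R)\otimes\mathbb Z[1/n],
\end{equation*}
where $K_i$ denotes the (ungraded) Quillen $K$-groups of the underlying rings $A$ and $R$. This completes the argument.

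I do not anticipate a substantial obstacle here, as the corollary is essentially a direct consequence of the preceding theorem combined with the cited result: the only point requiring care is confirming that the relevant notion of rank agrees, and that the commutative ring $R$ underlying the graded field is genuinely the centre of the Azumaya algebra $A$ as an ungraded ring. The latter follows because $Z(A)=R$ as graded rings, and for a graded central simple algebra the graded centre coincides with the ungraded centre (a homogeneous-component argument, since any central element has all its homogeneous components central). Once these identifications are in place, the statement of \cite[]{hm} applies verbatim.
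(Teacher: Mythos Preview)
Your proof is correct and follows essentially the same route as the paper's: invoke Theorem~\ref{gcsaazumayaalgebra} to see that $A$ is an Azumaya algebra, use Proposition~\ref{gradedfree} (since $R$ is a graded field) to see that $A$ is free over $R$, and then apply \eqref{propp} from \cite{hm}. One small terminological slip: under the usual convention ``degree $n$'' means $[A:R]=n^{2}$, not $[A:R]=n$ as you write, but since $\mathbb Z[1/n]=\mathbb Z[1/n^{2}]$ this does not affect the conclusion (and the paper itself glosses over this point).
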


\begin{proof}
By Theorem~\ref{gcsaazumayaalgebra}, a graded central simple algebra
$A$ over $R$ is an Azumaya algebra. From
Proposition~\ref{gradedfree}, since $R$ is a graded field, $A$ is a
free $R$-module. The corollary now follows immediately from
\cite[Thm.~6]{hm} (or see (\ref{propp})), since $A$ is an Azumaya
algebra free over its centre.
\end{proof}

\section{Graded $K$-theory of Azumaya algebras} \label{sectiongrdfunctors}

Corollary~\ref{gcsacor} above shows that the $K$-theory of a graded
division algebra is very close to the $K$-theory of its centre,
where this follows immediately from the corresponding result in the
non-graded setting (see~\cite[Thm.~6]{hm}). Note that for the
$K$-theory of a graded central simple algebra $A$, we are
considering $K_i(A)  = K_i (\mathcal P (A))$, where $\mathcal P (A)$
denotes the category of finitely generated projective $A$-modules.
But in the graded setting, there is also the category of graded
finitely generated projective modules over a given graded ring,
which is what we consider here. Below we define an abstract functor
called a graded $\mathcal D$-functor
(Definition~\ref{gradeddfunctor}), and show that its range is the
category of bounded torsion abelian groups. We use this to show that
a similar result to the above Corollary also holds when we consider
graded projective modules over a graded ring.

Let $\Ga$ be an abelian group and let $R= \bigoplus_{ \ga \in \Ga}
R_{\ga}$ be a commutative $\Ga$-graded ring. We will consider the
category $R \hygrmod$ which is defined as follows: the objects are
$\Ga$-graded left $R$-modules, and for two objects $M$, $N$ in
$R\hygrmod$, the morphisms are defined as
$$
\Hom_{R \hygrmod}(M,N) = \{ f \in \Hom_R (M,N) : f (M_\ga )
\subseteq N_\ga \textrm{ for all } \ga \in \Ga\}.
$$
\emph{Through out this section, unless otherwise stated, we will
assume that $\Ga$ is an abelian group, $R$ is a fixed commutative
$\Ga$-graded ring and all graded rings, graded modules and graded
algebras are also $\Ga$-graded.}

Let $A$ be a graded ring and let $(d) = (\de_1 , \ldots , \de_n)$,
where each $\de_i \in \Ga$. Then we have a graded ring $M_n
(A)(d)$,\label{pagerefforgrmatrixrings} where $M_n (A)(d)$ means the
$n \times n$-matrices over $A$ with the degree of the $ij$-entry
shifted by $\delta_i - \delta_j$. Thus, the $\varepsilon$-component
of $M_n(A)(d)$ consists of matrices with the $ij$-entry in
$A_{\varepsilon + \delta_i - \delta_j}$. Consider
$$A^n(d) = \bigoplus_{\ga \in \Ga} (A(\de_1)_{\ga} \oplus \cdots
\oplus A(\de_n)_{\ga}) $$ where $A(\de_i)_{\ga}$ is the
$\ga$-component of the $\de_i$-shifted graded $A$-module $A(\de_i)$.
Note that for each $i$, $1 \leq i \leq n$, the basis element $e_i$
of $A^n(d)$ is homogeneous of degree $- \de_i$.

Suppose $M$ is a graded left $A$-module which is graded free with a
finite homogeneous base $\{b_1, \ldots , b_n\}$, where $\deg (b_i) =
\delta_i$. If we ignore the grading, it is well-known that
$\mathrm{End}_A(M) \cong M_n(A)$. When we take the grading into
account, we have that $\mathrm{End}_A(M) \conggr M_n(A)(d)$ for $(d)
= (\delta_1 , \ldots, \delta_n )$ (see
\cite[Prop.~2.10.5]{grrings}). Note that this isomorphism does not
depend on the order that the elements in the basis are listed. For
some permutation $\pi \in S_n$ we have that $\{b_{\pi (1)}, \ldots ,
b_{\pi (n)}\}$ is also a homogeneous base of $M$. So for $(d') =
(\delta_{\pi (1)} , \ldots, \delta_{\pi (n)} )$ we have $ M_n(A)(d')
\conggr \mathrm{End}_A(M) \conggr M_n(A)(d).$ Further for $(-d)= ( -
\de_1 , \ldots , - \de_n)$, the map $\varphi : A^n(-d) \ra M$
defined by $\varphi( e_i)= b_i$ is a graded $A$-module isomorphism,
and we write $M \conggr A^n(-d)$.

For a $\Ga$-graded ring $A$, and $(d)=(\de_1, \ldots ,\de_n) \in
\Ga^n$, $(a) = (\al_1 , \ldots \al_m) \in \Ga^m$, let
$$
M_{n \times m} (A)[d][a] =
\begin{pmatrix}
A_{-\de_1  + \al_1} & A_{ -\de_1 + \al_2} & \cdots &
A_{-\de_1  + \al_m} \\
A_{-\de_2  + \al_1} & A_{ -\de_2 + \al_2} & \cdots &
A_{-\de_2  + \al_m} \\
\vdots  & \vdots  & \ddots & \vdots  \\
A_{-\de_n  + \al_1} & A_{ -\de_n + \al_2} & \cdots & A_{ -\de_n +
\al_m}
\end{pmatrix}.
$$
So $M_{n\times m} (A)[d][a]$ consists of matrices with the
$ij$-entry in $A_{-\de_i +\al_j}$.

\begin{prop} \label{rndconggrrna}
Let $A$ be a $\Ga$-graded ring and let $(d)=(\de_1, \ldots
,\de_n)\in \Ga^n,$ $(a) = (\al_1 , \ldots , \al_m) \in \Ga^m$. Then
$A^n(d) \conggr A^m (a)$ as graded $A$-modules if and only if there
exists $$(r_{ij}) \in \GL_{n\times m} (A)[d][a].$$
\end{prop}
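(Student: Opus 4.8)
The plan is to build an explicit dictionary between graded $A$-module homomorphisms $A^n(d)\to A^m(a)$ and matrices in $M_{n\times m}(A)[d][a]$, to show that under this dictionary composition of homomorphisms corresponds to multiplication of matrices and the identity map to the identity matrix, and then to read off both implications by applying the dictionary to a graded isomorphism together with its inverse. Here I read $\GL_{n\times m}(A)[d][a]$ as the set of those $(r_{ij})\in M_{n\times m}(A)[d][a]$ that admit a two-sided inverse in $M_{m\times n}(A)[a][d]$.

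First I would fix the canonical homogeneous bases $e_1,\dots,e_n$ of $A^n(d)$, with $\deg(e_i)=-\de_i$, and $f_1,\dots,f_m$ of $A^m(a)$, with $\deg(f_j)=-\al_j$. Since $A^n(d)$ is graded free, a graded homomorphism $\phi\colon A^n(d)\to A^m(a)$ is determined by the images $\phi(e_i)$, which I write as $\phi(e_i)=\sum_j r_{ij}f_j$ with $r_{ij}\in A$. The key step is the degree computation: because $\phi$ preserves degree and the $f_j$ are homogeneous, each summand $r_{ij}f_j$ is homogeneous of degree $\deg(e_i)=-\de_i$, whence $\deg(r_{ij})=-\de_i+\al_j$ and $(r_{ij})\in M_{n\times m}(A)[d][a]$. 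Conversely, any matrix in $M_{n\times m}(A)[d][a]$ defines, via $e_i\mapsto\sum_j r_{ij}f_j$ and $A$-linear extension, a genuine graded homomorphism, the degree condition on the entries being exactly what forces degree preservation. This yields a bijection $\Hom_{A\hygrmod}(A^n(d),A^m(a))\leftrightarrow M_{n\times m}(A)[d][a]$.

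Next I would verify that this bijection turns composition into matrix multiplication: if $\psi\colon A^m(a)\to A^n(d)$ has matrix $(s_{jk})\in M_{m\times n}(A)[a][d]$, defined by $\psi(f_j)=\sum_k s_{jk}e_k$, then a short computation using $A$-linearity gives $\psi(\phi(e_i))=\sum_k\big(\sum_j r_{ij}s_{jk}\big)e_k$, so the matrix of $\psi\circ\phi$ is the product $(r_{ij})(s_{jk})$, while the identity homomorphism corresponds to the identity matrix. Granting this, both directions follow at once. If $\phi$ is a graded isomorphism, then the matrices $(r_{ij})$ of $\phi$ and $(s_{jk})$ of $\phi^{-1}$ satisfy $(r_{ij})(s_{jk})=I_n$ and $(s_{jk})(r_{ij})=I_m$, so $(r_{ij})\in\GL_{n\times m}(A)[d][a]$; conversely, a matrix in $\GL_{n\times m}(A)[d][a]$ and its inverse produce mutually inverse graded homomorphisms, hence a graded isomorphism $A^n(d)\conggr A^m(a)$.

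I expect the only delicate point to be the shift bookkeeping throughout — in particular the verification that the product of an entry in $A_{-\de_i+\al_j}$ with one in $A_{-\al_j+\de_k}$ lands in $A_{-\de_i+\de_k}$, so that $(r_{ij})(s_{jk})$ is a well-formed matrix whose $(i,k)$-entry lies in $A_{-\de_i+\de_k}$ (a scheme that indeed contains $I_n$). Beyond this careful tracking of the shifts, the argument is simply the identification of homomorphisms of graded free modules with matrices, and is the shifted analogue of the isomorphism $\End_A(M)\conggr M_n(A)(d)$ already recorded above, so no genuinely new idea is needed.
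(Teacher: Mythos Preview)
Your proposal is correct and follows essentially the same approach as the paper: both associate to a graded homomorphism $\phi$ the matrix $(r_{ij})$ of coordinates of $\phi(e_i)$, check via the degree computation $\deg(r_{ij})=-\de_i+\al_j$ that this matrix lies in $M_{n\times m}(A)[d][a]$, and then use the inverse homomorphism (respectively inverse matrix) to produce the required two-sided inverse. The only organisational difference is that you package the correspondence as a single dictionary (homomorphisms $\leftrightarrow$ matrices, composition $\leftrightarrow$ product) and read off both implications at once, whereas the paper treats the two directions separately; the underlying ideas and computations are identical.
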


\begin{proof}
If $r = (r_{ij}) \in \GL_{n \times m} (A)[d][a]$, then there is a
graded $A$-module homomorphism
\begin{align*}
\mar_r :A^n (d)& \lra A^m(a) \\
(x_1 , \ldots, x_n) & \lmps (x_1 , \ldots, x_n) r.
\end{align*}
Since $r$ is invertible, there is a matrix $t \in \GL_{m \times n}
(A)$ with $rt=I_{n}$ and $tr = I_{m}$. So there is an $A$-module
homomorphism $\mar_t : A^m (a) \ra A^n(d)$, which is an inverse of
$\mar_r$. This proves that $\mar_r$ is bijective, and therefore it
is a graded $A$-module isomorphism.

Conversely, if $\phi : A^n (d) \conggr A^m(a)$, then we can
construct a matrix as follows. Let $e_i$ denote the basis element of
$A^n(d)$ with $1$ in the $i$-th entry and $0$ elsewhere. Then let
$\phi (e_i) = (r_{i1}, r_{i2} , \ldots , r_{im})$, and let $r=
(r_{ij})_{n \times m}$. It can be easily verified that $r \in M_{n
\times m} (A)[d][a]$. In the same way, using $\phi^{-1} : A^m (a)
\ra A^n(d)$ construct a matrix $t$. Let $e'_i$ denote the $i$-th
element of the standard basis for $A^m (a)$. Since $e_i = \phi^{-1}
\circ \phi (e_i) = r_{i1} \phi^{-1}(e'_1) + r_{i2} \phi^{-1} (e'_2)
\ldots + r_{im} \phi^{-1}(e'_m)$ for each $i$, and in a similar way
for $\phi \circ \phi^{-1}$, we can show that $rt=I_n$ and $tr= I_m$.
So $(r_{ij}) \in \GL_{n \times m} (A)[d][a]$.
\end{proof}

For convenience, in the above definition of $M_{n \times m} (A)
[d][a]$, if $(a) = (0, \ldots , 0)$, then we will write $M_{n \times
m} (A)[d]$ instead of $M_{n \times m} (A)[d][0]$. We let
$$
\Ga^*_{M_{n \times m}(A)} = \big \{ (d) \in \Ga^n : \GL_{n \times
m}(A)[d] \neq \emptyset \big \}.
$$
Then it is immediate from the above Proposition that $A^n(d) \conggr
A^n$ as graded $A$-modules if and only if $(d) \in \Ga^*_{M_n(A)}$.

A graded $A$-module $P= \bigoplus_{\ga \in \Ga} P_{\ga}$ is said to
be {\em graded projective} (resp.\ {\em graded faithfully
projective}) if $P$ is  projective (resp.\ faithfully projective)
as an $A$-module. Note that a graded $A$-module $P$ is projective as
an $A$-module if and only if $\Hom_{A \hygrmod}(P, -)$ is an exact
functor in $A \hygrmod$. We use $\Pgr (A)$ to denote the category of
graded finitely generated projective modules over $A$. The following
Proposition proves a partial result of Morita equivalence (only in
one direction), which we will use later in this section (after
Definition~\ref{gradeddfunctor}).

\begin{prop}[Morita Equivalence in the graded setting] \label{grmorita}
Let $A$ be a graded ring and let $(d) = (\de_1 , \ldots , \de_n)$,
where each $\de_i \in \Ga$. Then the functors
\begin{align*}
\psi : \Pgr( M_n(A)(d) ) & \lra \Pgr( A) \\
P & \longmapsto A^n(-d)\otimes_{M_n(A)(d)}P \\
\textrm{ and \;\;\;\; } \va : \Pgr( A) & \lra \Pgr ( M_n(A)(d)) \\
Q & \longmapsto A^n(d)\otimes_A Q
\end{align*}
form equivalences of categories.
\end{prop}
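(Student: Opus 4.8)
The plan is to treat this as the graded incarnation of the classical Morita equivalence between a ring and its matrix ring, with essentially all the content lying in verifying that the standard non-graded isomorphisms are homogeneous of degree $0$. Write $B = M_n(A)(d)$. The first step is to record the relevant bimodule structures. Viewing $A^n(d)$ as column vectors, it carries a natural left $B$-action (matrix multiplication) and right $A$-action (entrywise), making it a graded $(B,A)$-bimodule; dually, viewing $A^n(-d)$ as row vectors gives a graded $(A,B)$-bimodule. The key point here is that these actions are homogeneous. Recalling that $e_i \in A^n(d)$ has degree $-\de_i$, one has $(A^n(d))_\ga = \bigoplus_i A_{\ga + \de_i}$; then for a homogeneous matrix $m \in B$ of degree $\ep$ (so its $ij$-entry lies in $A_{\ep + \de_i - \de_j}$) and a homogeneous $x \in (A^n(d))_\ga$, the $i$-th entry of $mx$ lands in $A_{\ep + \ga + \de_i}$, whence $mx \in (A^n(d))_{\ep + \ga}$, exactly as a graded action requires. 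The analogous count for $A^n(-d)$, where $(A^n(-d))_\ga = \bigoplus_i A_{\ga - \de_i}$, confirms it is a graded $(A,B)$-bimodule.

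Next I would write down the pairing and outer-product maps and check they are graded bimodule isomorphisms:
\begin{align*}
\mu &: A^n(d) \otimes_A A^n(-d) \lra B, \qquad x \otimes y \lmps (x_i y_j)_{ij},\\
\tau &: A^n(-d) \otimes_B A^n(d) \lra A, \qquad y \otimes x \lmps \textstyle\sum_i y_i x_i.
\end{align*}
That these are bijective bimodule homomorphisms is precisely the classical Morita fact, so only the grading needs attention. A direct degree count shows both are homogeneous of degree $0$: if $x \in (A^n(d))_\ga$ and $y \in (A^n(-d))_{\ga'}$, then $x_i y_j \in A_{(\ga + \ga') + \de_i - \de_j}$, which is exactly the $ij$-entry of the $(\ga + \ga')$-component of $M_n(A)(d)$, matching the degree $\ga + \ga'$ of $x \otimes y$; and if $y \in (A^n(-d))_\ga$ and $x \in (A^n(d))_{\ga'}$, then each $y_i x_i \in A_{\ga + \ga'}$, again matching. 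Hence $\mu$ is a graded $(B,B)$-bimodule isomorphism and $\tau$ a graded $(A,A)$-bimodule isomorphism.

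With these in hand the argument concludes formally. Using associativity of the tensor product together with $\mu$ and $\tau$, for any $Q \in \Pgr(A)$ and $P \in \Pgr(B)$ one obtains natural graded isomorphisms
\begin{align*}
\psi \va (Q) &= A^n(-d) \otimes_B \big( A^n(d) \otimes_A Q \big) \conggr \big( A^n(-d) \otimes_B A^n(d) \big) \otimes_A Q \conggr A \otimes_A Q \conggr Q,\\
\va \psi (P) &= A^n(d) \otimes_A \big( A^n(-d) \otimes_B P \big) \conggr \big( A^n(d) \otimes_A A^n(-d) \big) \otimes_B P \conggr B \otimes_B P \conggr P,
\end{align*}
so $\psi \va \conggr \id_{\Pgr(A)}$ and $\va \psi \conggr \id_{\Pgr(B)}$. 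It remains to observe that the functors are well defined, i.e.\ that they land in the stated categories. Since $A^n(d)$ is a graded direct summand of $B$ as a left $B$-module (it is a column of $M_n(A)(d)$) and $A^n(-d)$ is graded free of finite rank as a left $A$-module, both functors are additive and grading-preserving, with $\va(A) \conggr A^n(d)$ and $\psi(B) \conggr A^n(-d)$ graded finitely generated projective; the general case then follows by additivity and passage to retracts. I expect the one genuinely delicate point to be the bookkeeping in the first two steps: keeping the conventions for the shifts $\pm\de_i$ and the left/right actions mutually consistent (in particular since $A$ need not be commutative), so that $\mu$ and $\tau$ emerge as degree-$0$ maps rather than degree-shifting ones. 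Once the degree counts are pinned down, the remainder is the standard Morita formalism.
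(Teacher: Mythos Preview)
Your proposal is correct and follows essentially the same route as the paper: both set up $A^n(d)$ and $A^n(-d)$ as graded $(M_n(A)(d),A)$- and $(A,M_n(A)(d))$-bimodules, establish the graded bimodule isomorphisms $A^n(d)\otimes_A A^n(-d)\conggr M_n(A)(d)$ and $A^n(-d)\otimes_{M_n(A)(d)} A^n(d)\conggr A$ (your $\mu,\tau$ are exactly the paper's $\theta',\theta$), and then conclude via associativity of the tensor product. The only cosmetic difference is that the paper writes down explicit inverses $\sigma,\sigma'$ rather than invoking the classical Morita bijections, while you in turn spell out the degree bookkeeping and the well-definedness of the functors more carefully than the paper does.
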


\begin{proof}
Observe that $A^n(d)$ is a graded $M_n(A)(d) \mhyphen A$-bimodule
and $A^n(-d)$ is a graded $A \mhyphen M_n(A)(d)$-bimodule. Then
\begin{align*}
\theta : \; A^n(-d)\otimes_{M_n(A)(d)} A^n(d)  & \lra A \\
(a_1 , \ldots , a_n ) \otimes (b_1, \ldots , b_n) & \longmapsto  a_1
b_1 + \cdots + a_n b_n ; \\
\textrm{ and \;\;\;\;}\sigma : \; A  & \lra
A^n(-d)\otimes_{M_n(A)(d)}
A^n(d)\\
a & \longmapsto (a , 0 , \ldots , 0) \otimes (1 , 0 , \ldots 0)
\end{align*}
are graded $A$-module homomorphisms with $\sigma \circ \theta = \id$
and $\theta \circ \sigma = \id$. Further
\begin{align*}
\theta' : \; A^n(d) \otimes_{A} A^n(-d) & \lra M_n(A)(d)\\
\begin{pmatrix}
a_1 \\ \vdots \\ a_n
\end{pmatrix}
\otimes
\begin{pmatrix}
b_1 \\ \vdots \\ b_n
\end{pmatrix}
& \longmapsto
\begin{pmatrix}
a_{1} b_1 & \cdots & a_{1} b_n  \\
\vdots  &   & \vdots  \\
a_{n} b_1  & \cdots & a_{n} b_n
\end{pmatrix}
\end{align*}
\begin{align*}
\textrm{and \;\;\;\; } \sigma' : \; M_n(A)(d) & \lra A^n(d) \otimes_{A} A^n(-d) \\
(m_{i,j}) & \longmapsto
\begin{pmatrix}
m_{1,1} \\ m_{2,1} \\ \vdots \\ m_{n,1}
\end{pmatrix}
\otimes
\begin{pmatrix}
1\\ 0 \\ \vdots \\0
\end{pmatrix}
+ \cdots +
\begin{pmatrix}
m_{1,n} \\ \vdots \\ m_{n-1,n} \\ m_{n,n}
\end{pmatrix}
\otimes
\begin{pmatrix}
0\\\vdots \\0 \\ 1
\end{pmatrix}
\end{align*}
are graded $M_n(A)(d)$-module homomorphisms with $\sigma' \circ
\theta' = \id$ and $\theta' \circ \sigma' = \id$. So
$A^n(-d)\otimes_{M_n(A)(d)} A^n(d) \conggr A$ and $A^n(d)
\otimes_{A} A^n(-d) \conggr M_n(A)(d)$ as $A \mhyphen A$-bimodules
and $M_n(A)(d) \mhyphen M_n(A)(d)$-bimodules respectively. Then for
$P \in \Pgr( M_n(A)(d) )$, $A^n(d) \otimes_{A} A^n(-d)
\otimes_{M_n(A)(d)} P \conggr P$ and for $Q \in\Pgr (A)$, $A^n(-d)
\otimes_{M_n(A)(d)} A^n(d) \otimes_A Q \conggr Q$, which shows that
$\psi$ and $\va$ are mutually inverse equivalences of categories.
\end{proof}

A graded $R$-algebra $A$ is called a {\em graded Azumaya algebra} if
$A$ is graded faithfully projective and $A \otimes_R A^{\op} \conggr
\End_R (A)$. We let $\text{Az}_\gr(R)$ denote the category of graded
Azumaya algebras over $R$ and $\mathcal Ab$ the category of abelian
groups. Note that a graded $R$-algebra which is an Azumaya algebra
(in the non-graded sense) is also a graded Azumaya algebra, since it
is faithfully projective as an $R$-module, and the natural
homomorphism $A \otimes_R A^{\op} \ra \End_R (A)$ is clearly graded.
So a graded central simple algebra over a graded field (as in
Theorem~\ref{gcsaazumayaalgebra}) is in fact a graded Azumaya
algebra.

\begin{defin} \label{gradeddfunctor}
An abstract functor $\mathcal F : \text{Az}_\gr(R) \ra \mathcal Ab$
is defined to be a {\bf graded \boldmath$\mathcal D$-functor} if it
satisfies the three properties below:
\begin{enumerate}
\item $\mathcal F(R)$ is the trivial group.

\item For any graded $R$-Azumaya algebra $A$ and for any $(d) = (\de_1
, \ldots , \de_k) \in \Ga^*_{M_k (R)}$, there is a homomorphism
$$\rho: \mathcal F (M_k (A) (d)) \lra \mathcal F (A)$$ such that the composition $$\mathcal
F (A) \lra \mathcal F (M_k (A) (d)) \lra \mathcal F (A)$$ is
$\eta_{k}$, where $\eta_k(x)=x^k$.

\item With $\rho$ as in property (2), $\ker(\rho)$ is $k$-torsion.
\end{enumerate}Note that these properties are well-defined since both $R$
and $M_k(A)(d)$ are graded Azumaya algebras over $R$.

\end{defin}


We set $K_i^{\gr} (R) = K_i (\Pgr (R))$, where $\mathcal P
\mathrm{gr}(R)$ is the category of graded finitely generated
projective $R$-modules and $K_i$ are the Quillen $K$-groups. Let $A$
be a graded ring with graded centre $R$. Then the graded $R$-linear
homomorphism $R \rightarrow A$ induces an exact functor $\mathcal P
\mathrm{gr}(R) \ra \Pgr (A)$, which, in turn, induces a group
homomorphism $K_i^{\gr} (R) \ra K_i^{\gr} (A)$. Then we have an
exact sequence
\begin{equation}\label{grexactseq}
1 \rightarrow \ZK[i]^{\gr}(A) \rightarrow K_i^{\gr}(R) \rightarrow
K_i^{\gr}(A) \rightarrow \CK[i]^{\gr}(A) \rightarrow 1
\end{equation}
where $\ZK[i]^{\gr}(A)$ and $\CK[i]^{\gr}(A)$ are the kernel and
cokernel of the map $K_i^{\gr}(R) \rightarrow K_i^{\gr}(A)$
respectively. Then $\CK[i]^\gr$ can be regarded as the following
functor
\begin{align*}
\CK[i]^{\gr}: \text{Az}_{\gr}(R) &\lra  \mathcal Ab \\
A &\longmapsto \CK[i]^{\gr}(A),
\end{align*}
and similarly for $\ZK[i]^{\gr}$. We will now show that
$\CK[i]^{\gr}$ is a graded $\mathcal D$-functor. Property (1) is
clear, since $R$ is commutative so $K_i^{\gr} (Z(R)) \ra
K_i^{\gr}(R)$ is the identity map. For property (2), let $\Pgr(A)$
and $\Pgr(M_k(A)(d))$ denote the categories of graded finitely
generated projective left modules over $A$ and $M_k(A)(d)$
respectively.

Then there are functors:
\begin{align}\label{grfor}
\phi : \Pgr(A) & \longrightarrow \Pgr (M_k(A)(d)) \\
 X  & \longmapsto M_k(A)(d) \otimes_A X \notag
\end{align}
and
\begin{align}\label{grfor1}
\psi : \Pgr (M_k(A)(d))   & \longrightarrow \Pgr (A) \\
 Y  & \longmapsto A^k(-d) \otimes_{M_k(A)(d)} Y. \notag
\end{align}
The functor $\phi$ induces a homomorphism from $K_i^{\gr}(A)$ to
$K_i^{\gr}(M_k(A)(d))$. By the graded version of the Morita Theorems
\label{moritapageref} (see Proposition~\ref{grmorita}), the functor
$\psi$ establishes a natural equivalence of categories, so it
induces an isomorphism from $K_i^{\gr}(M_k(A)(d))$ to
$K_i^{\gr}(A)$. For $X \in \Pgr(A)$, $\psi \circ \phi (X) \conggr
X^k(-d)$.  Since each $(d) \in \Ga^*_{M_k (R)}$, using a similar
argument to that of Proposition~\ref{rndconggrrna}, we have $X^k(-d)
\conggr X^k$. Since $K_i$ are functors which respect direct sums,
this induces a multiplication by $k$ on the level of $K$-groups.

Now the exact functors (\ref{grfor}) and (\ref{grfor1}) induce the
following commutative diagram:
\begin{equation*}
\begin{split}
\xymatrix{ K_i^{\gr}(R)  \ar[r] \ar[d]^{=} & K_i^{\gr}(A)  \ar[r]
\ar[d]^\phi
& \CK[i]^{\gr}(A) \ar[r] \ar[d]&1  \\
K_i^{\gr}(R)  \ar[r] \ar[d]^{\eta_k} & K_i^{\gr}(M_k(A)(d))  \ar[r]
\ar[d]^\psi_\cong & \CK[i]^{\gr}(M_k(A)(d)) \ar[r] \ar[d]^\rho& 1 \\
K_i^{\gr}(R)  \ar[r]  & K_i^{\gr}(A)  \ar[r] & \CK[i]^{\gr}(A)
\ar[r]& 1 }
\end{split}
\end{equation*}
where composition of the columns are $\eta_k$, proving property (2).
A diagram chase verifies that property (3) also holds. A similar
proof shows that $\ZK[i]^\gr$ is also a graded $\mathcal D$-functor.

\begin{thm} \label{grtorsionthm} Let $A$ be a graded Azumaya algebra
which is graded free over its centre $R$ of rank $n$, such that $A$
has a homogenous basis with degrees $(\de_1, \ldots , \de_n)$ in
$\Ga^*_{M_n(R)}$. Then $\mathcal F (A)$ is $n^2$-torsion, where
$\mathcal F$ is a graded $\mathcal D$-functor.
\end{thm}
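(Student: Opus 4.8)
The plan is to extract the $n^2$-torsion of $\mathcal F(A)$ from the two maps supplied by property (2) of a graded $\mathcal D$-functor, after first establishing that $\mathcal F(M_n(R)(d))$ is $n$-torsion. First I would apply property (2) to the graded Azumaya algebra $R$ itself with $k=n$ and the shift $(d)=(\de_1,\ldots,\de_n)\in\Ga^*_{M_n(R)}$. This yields a homomorphism $\rho_0\colon \mathcal F(M_n(R)(d))\to\mathcal F(R)$; since $\mathcal F(R)$ is trivial by property (1), the kernel of $\rho_0$ is all of $\mathcal F(M_n(R)(d))$, and by property (3) this kernel is $n$-torsion. Hence $\mathcal F(M_n(R)(d))$ is $n$-torsion.

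Next I would exploit that $A$ is graded Azumaya and graded free of rank $n$: the defining isomorphism gives $A\otimes_R A^{\op}\conggr\End_R(A)$, and because $A$ has a homogeneous basis of degrees $(d)$ we have $\End_R(A)\conggr M_n(R)(d)$. Combining these, $M_n(A)(d)\conggr A\otimes_R M_n(R)(d)\conggr A\otimes_R A\otimes_R A^{\op}$. I would then apply property (2) to $A$ itself with $k=n$ and shift $(d)$ (legitimate since $(d)\in\Ga^*_{M_n(R)}$), obtaining maps $\iota\colon\mathcal F(A)\to\mathcal F(M_n(A)(d))$ (induced by the inclusion of $A$ as scalar matrices) and $\rho\colon\mathcal F(M_n(A)(d))\to\mathcal F(A)$ with $\rho\circ\iota=\eta_n$.

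The crux is to prove that $\iota(x)^n=1$ for every $x\in\mathcal F(A)$. Under the identification $M_n(A)(d)\conggr A\otimes_R A\otimes_R A^{\op}$, the scalar inclusion $\iota$ is the map $a\mapsto a\otimes 1\otimes 1$, placing $a$ in the first tensor factor. The interchange of the two copies of $A$ is a graded $R$-algebra automorphism $\tau$ of $A\otimes_R A\otimes_R A^{\op}$ (a routine check that it respects the twisted multiplication; it preserves the grading since it merely permutes factors whose degrees add), and $\tau\circ\iota$ sends $a$ into the second factor. Regrouping the last two factors as $A\otimes_R A^{\op}\conggr M_n(R)(d)\conggr\End_R(A)$, the element $a\otimes 1$ is exactly the image of $a$ under the left regular representation $L\colon A\to\End_R(A)\conggr M_n(R)(d)$; thus $\tau\circ\iota$ equals the composite $A\xrightarrow{\,L\,}M_n(R)(d)\hookrightarrow M_n(A)(d)$, the second arrow being induced by $R\hookrightarrow A$. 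Applying the functor $\mathcal F$, the homomorphism $\mathcal F(\tau)\circ\mathcal F(\iota)$ factors through $\mathcal F(M_n(R)(d))$, which is $n$-torsion by the first step, so $\mathcal F(\tau)(\iota(x))^n=1$; as $\mathcal F(\tau)$ is an automorphism this forces $\iota(x)^n=1$.

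Finally I would conclude: for $x\in\mathcal F(A)$,
$$x^{n^2}=\big(\rho(\iota(x))\big)^n=\rho\big(\iota(x)^n\big)=\rho(1)=1,$$
using $\rho\circ\iota=\eta_n$, that $\rho$ is a group homomorphism, and $\iota(x)^n=1$; this shows $\mathcal F(A)$ is $n^2$-torsion. I expect the main obstacle to be the bookkeeping in the third paragraph: fixing the precise graded isomorphism $M_n(A)(d)\conggr A\otimes_R A\otimes_R A^{\op}$, verifying that the factor-swap $\tau$ is a \emph{graded} algebra automorphism, and confirming that $\iota$ is carried to the regular-representation map followed by $M_n(R)(d)\hookrightarrow M_n(A)(d)$. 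Once these identifications are pinned down, the remaining deductions are purely formal.
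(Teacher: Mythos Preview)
Your argument is correct and follows the same strategy as the paper: show $\mathcal F(M_n(R)(d))$ is $n$-torsion via properties (1)--(3), then factor the canonical map $\mathcal F(A)\to\mathcal F(M_n(A)(d))$ through this $n$-torsion group so that $\eta_n(x)^n=1$. The only difference is that your route through the three-fold tensor product $A\otimes_R A\otimes_R A^{\op}$ and the swap automorphism $\tau$ is an unnecessary detour. The paper observes directly that the diagonal embedding $A\to M_n(A)(d)$ factors as
\[
A \xrightarrow{\,i\,} A\otimes_R A^{\op} \xrightarrow{\,\id_A\otimes r\,} A\otimes_R M_n(R)(d)\conggr M_n(A)(d),
\]
where $i(a)=a\otimes 1$ and $r\colon A^{\op}\to\End_R(A^{\op})\conggr M_n(R)(d)$ is the regular representation; since $\mathcal F(A\otimes_R A^{\op})\cong\mathcal F(M_n(R)(d))$ is $n$-torsion, one gets $i(x)^n=1$ immediately without invoking $\tau$. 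Your factor-swap accomplishes the same thing after first pulling in an extra copy of $A$, so the bookkeeping you flag as the ``main obstacle'' can simply be bypassed.
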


\begin{proof}
Let $\{a_1 , \ldots , a_n\}$ be a homogeneous basis for $A$ over
$R$, and let $(d)= \big( \! \deg (a_1) , \ldots ,\deg( a_n) \big)
\in \Ga^*_{M_n (R)}$. Since $R$ is a graded Azumaya algebra over
itself, by (2) in the definition of a graded $\mathcal D$-functor,
there is a homomorphism $\rho:\mathcal F(M_n(R)(d)) \rightarrow
\mathcal F(R)$. But $\mathcal F(R)$ is trivial by property (1) and
therefore the kernel of $\rho$ is $\mathcal F(M_n(R)(d))$ which is,
by (3), $n$-torsion.

In the category $\text{Az}_\gr(R)$, the two graded $R$-algebra
homomorphisms $i:A \rightarrow A \otimes_R A^{\text{op}}$ and $r:
A^{\text{op}} \ra \End_R (A^{\op}) \ra M_n(R)(d)$ induce group
homomorphisms $\mathcal F(A) \ra \mathcal F(A \otimes_R A^{\op})$
and $\mathcal F(A \otimes_R A^{\op}) \ra \mathcal F (A \otimes_R
M_n(R)(d))$, where  $\mathcal F(A \otimes_R M_n(R)(d)) \cong
\mathcal F (M_n(A)(d))$. Further, the graded $R$-algebra isomorphism
$A \otimes_R A^{\text{op}} \conggr \End_R(A)$ from the definition of
a graded Azumaya algebra, combined with the graded isomorphism
$\End_R(A) \conggr M_n(R)(d)$, induces an isomorphism $\mathcal F(A
\otimes_R A^{\text{op}}) \cong \mathcal F(M_n(R)(d))$. Consider the
following diagram
\begin{equation*}
\begin{split}
\xymatrix{
& \mathcal F(A) \ar[d]^i \ar@/^/[ddr]^{\eta_n}&  \\
& \mathcal F(M_n(R)(d)) \cong  \mathcal F(A \otimes_R A^{\text{op}})
\ar[d]^r\;\;\;\;\;\;\;\;\;\;\;\;\;\;\;\;\;\;\;\;&  \\
& \mathcal F(M_n(A)(d))\ar[r]^-{\rho} & \mathcal F(A)}
\end{split}
\end{equation*}
which is commutative by property (2). It follows that $\mathcal
F(A)$ is $n^2$-torsion.
\end{proof}

\begin{thm}\label{grazumayafreethm}
Let $A$ be a graded Azumaya algebra which is graded free over its
centre $R$ of rank $n$, such that $A$ has a homogenous basis with
degrees $(\de_1, \ldots , \de_n)$ in $\Ga^*_{M_n(R)}$. Then for any
$i \geq 0$, $$K_i^{\gr}(A) \otimes \mathbb Z[1/n] \cong K_i^{\gr}(R)
\otimes \mathbb Z[1/n].$$
\end{thm}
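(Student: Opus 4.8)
The plan is to feed the four-term exact sequence \eqref{grexactseq} into the torsion bound of Theorem~\ref{grtorsionthm} and then to tensor the torsion away. Recall that in the discussion preceding Theorem~\ref{grtorsionthm} we established that both $\ZK[i]^{\gr}$ and $\CK[i]^{\gr}$ are graded $\mathcal D$-functors on $\text{Az}_{\gr}(R)$. The hypotheses here, namely that $A$ is a graded Azumaya algebra, graded free over $R$ of rank $n$, with a homogeneous basis whose degree vector $(\de_1 , \ldots , \de_n)$ lies in $\Ga^*_{M_n(R)}$, are exactly those of Theorem~\ref{grtorsionthm}. So I would first apply that theorem twice, once with $\mathcal F = \ZK[i]^{\gr}$ and once with $\mathcal F = \CK[i]^{\gr}$, to conclude that both $\ZK[i]^{\gr}(A)$ and $\CK[i]^{\gr}(A)$ are $n^2$-torsion abelian groups.

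Next I would tensor \eqref{grexactseq} with $\mathbb Z[1/n]$. Since $\mathbb Z[1/n]$ is a localization of $\mathbb Z$, it is a flat $\mathbb Z$-module, so tensoring preserves exactness and the sequence
$$\ZK[i]^{\gr}(A) \otimes \mathbb Z[1/n] \ra K_i^{\gr}(R)\otimes \mathbb Z[1/n] \ra K_i^{\gr}(A)\otimes \mathbb Z[1/n] \ra \CK[i]^{\gr}(A)\otimes \mathbb Z[1/n]$$
remains exact. The crucial elementary observation is that a bounded torsion group vanishes under this operation: if $G$ is $n^2$-torsion then for any $g \in G$ we have $g \otimes 1 = g \otimes (n^2 \cdot n^{-2}) = (n^2 g) \otimes n^{-2} = 0$ in $G \otimes \mathbb Z[1/n]$, because $n$, and hence $n^2$, is invertible in $\mathbb Z[1/n]$. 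Thus both outer terms of the displayed sequence are zero.

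With the two outer terms trivial, exactness forces the middle arrow $K_i^{\gr}(R)\otimes \mathbb Z[1/n] \ra K_i^{\gr}(A)\otimes \mathbb Z[1/n]$ to be an isomorphism, which is precisely the asserted relation. I do not expect a genuine obstacle at this stage: all of the substantive content has already been extracted in Theorem~\ref{grtorsionthm}, and the remaining points needing only a word of justification are the flatness of $\mathbb Z[1/n]$ (so that exactness survives the tensor) and the fact that an $n^2$-torsion group dies once $n$ is inverted. If anything, the only care required is to confirm that the map $K_i^{\gr}(R) \ra K_i^{\gr}(A)$ appearing in \eqref{grexactseq} is the one whose kernel and cokernel are the $\mathcal D$-functors just bounded, which is how that sequence was set up.
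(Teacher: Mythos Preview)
Your proposal is correct and follows exactly the same approach as the paper's own proof: apply Theorem~\ref{grtorsionthm} to the graded $\mathcal D$-functors $\ZK[i]^{\gr}$ and $\CK[i]^{\gr}$ to get $n^2$-torsion, then tensor the exact sequence~\eqref{grexactseq} with $\mathbb Z[1/n]$ so that the outer terms vanish. You have merely spelled out the flatness of $\mathbb Z[1/n]$ and the vanishing of $n^2$-torsion groups under this localization more explicitly than the paper does.
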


\begin{proof} The argument before Theorem~\ref{grtorsionthm}
shows that $\CK[i]^{\gr}$ (and in the same manner $\ZK[i]^{\gr}$) is
a graded $\mathcal D$-functor, and thus by the theorem
$\CK[i]^{\gr}(A)$ and $\ZK[i]^{\gr}(A)$ are $n^2$-torsion abelian
groups. Tensoring the exact sequence (\ref{grexactseq}) by $\mathbb
Z[1/n]$, since $\CK[i]^{\gr}(A) \otimes \mathbb Z[1/n]$ and
$\ZK[i]^{\gr}(A) \otimes \mathbb Z[1/n]$ vanish, the result follows.
\end{proof}

\begin{cor}[\cite{hm}, Thm.~6]
Let $A$ be an Azumaya algebra free over its centre $R$ of rank $n$.
Then for any $i \geq 0$, $$K_i(A) \otimes \mathbb Z[1/n] \cong
K_i(R) \otimes \mathbb Z[1/n].$$
\end{cor}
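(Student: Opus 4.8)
The plan is to derive this non-graded statement as a special case of the graded machinery developed in the paper. The key observation is that any ring can be regarded as trivially graded by taking $\Ga = \{0\}$, i.e.\ concentrating the ring in degree zero. Under this trivial grading, the graded structures collapse onto their classical counterparts: a graded $R$-module is simply an $R$-module, the category $\Pgr(R)$ coincides with the category $\mathcal{P}(R)$ of finitely generated projective $R$-modules, and hence $K_i^{\gr}(R) = K_i(R)$ for all $i \geq 0$. Likewise $K_i^{\gr}(A) = K_i(A)$. The first step, therefore, is to verify that an Azumaya algebra $A$ free over its centre $R$ of rank $n$, viewed as trivially $\{0\}$-graded, satisfies the hypotheses of Theorem~\ref{grazumayafreethm}.

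Next I would check the technical condition on the degrees of the basis. Since everything lives in degree zero, a free $R$-basis $\{a_1, \ldots, a_n\}$ of $A$ consists of homogeneous elements all of degree $0$, so the degree tuple is $(d) = (0, \ldots, 0)$. It then remains to confirm that $(d) \in \Ga^*_{M_n(R)}$; but with $\Ga = \{0\}$ this set contains $(0,\ldots,0)$ trivially, since $\GL_{n \times n}(R)[0] = \GL_n(R)$ is non-empty (it contains the identity matrix). Thus all the hypotheses of Theorem~\ref{grazumayafreethm} are met. Recalling that $A$ is a graded Azumaya algebra in the trivial-grading sense precisely because it is an (ordinary) Azumaya algebra free over $R$, as noted in the remark preceding Definition~\ref{gradeddfunctor}, we may apply the theorem directly.

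Applying Theorem~\ref{grazumayafreethm} to this trivially graded situation yields
$$K_i^{\gr}(A) \otimes \mathbb Z[1/n] \cong K_i^{\gr}(R) \otimes \mathbb Z[1/n],$$
and translating back through the identifications $K_i^{\gr}(A) = K_i(A)$ and $K_i^{\gr}(R) = K_i(R)$ gives exactly the claimed isomorphism. The only point requiring genuine care, and hence the main (minor) obstacle, is the bookkeeping that confirms the trivial grading really does make the graded categories, graded $K$-groups, and the index set $\Ga^*_{M_n(R)}$ reduce to their ungraded analogues, so that no hidden grading-theoretic hypothesis is being smuggled in. Once this identification is checked, the corollary is an immediate specialisation and no further computation is needed.
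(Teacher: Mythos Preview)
Your proposal is correct and follows exactly the paper's approach: the paper's proof is the single line ``By taking $\Ga$ to be the trivial group, this follows immediately from Theorem~\ref{grazumayafreethm}.'' You have simply spelled out the routine verifications (that graded categories, $K$-groups, and the condition $(d)\in\Ga^*_{M_n(R)}$ collapse to their ungraded analogues) that the paper leaves implicit.
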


\begin{proof}
By taking $\Ga$ to be the trivial group, this follows immediately
from Theorem~\ref{grazumayafreethm}.
\end{proof}

\begin{remark}
Note that a graded division algebra $A$ is strongly graded. By
Dade's Theorem \cite[Thm.~3.1.1]{grrings}, there is an additive
functor from the category of $A_0$-modules to the category of graded
$A$-modules which induces an equivalence of categories. This implies
that
\begin{equation} \label{dade}
K_i(A_0) \cong K_i^{\gr}(A).
\end{equation}
Let $D$ be a tame and Henselian valued division algebra with centre
$F$ of index $n$. Consider the associated graded division algebra
$\gr(D)$ with centre $\gr(F)$. We know $\gr(D)_0=\overline D$ and
$\gr(F)_0=\overline F$ and
$$[\gr(D):\gr(F)]=[\Gamma_D:\Gamma_F][\overline D:\overline F],$$
(see~\cite{hwcor}). If $D$ is unramified over $F$, i.e.,
$\Gamma_D=\Gamma_F$, then the assumption of
Theorem~\ref{grazumayafreethm} on the homogenous basis is satisfied,
so
$$
K_i^{\gr}(\gr(D)) \otimes \mathbb Z[1/n]  \cong K_i^{\gr}(\gr(F)) \otimes \mathbb Z[1/n].
$$
\end{remark}

We end the note with an example of a graded Azumaya algebra such
that its graded $K$-theory is not the same as the graded $K$-theory
of its centre.

\begin{example}\label{eggrktheory}
Consider the quaternion algebra $\mathbb H = \mathbb R \oplus
\mathbb R i \oplus \mathbb R j \oplus \mathbb R k$. Then $\mathbb H$
is an Azumaya algebra over $\mathbb R$ and it is a $\mathbb Z_2
\times \mathbb Z_2$-graded division ring. So $\mathbb H$ is in fact
a graded Azumaya algebra, which is strongly $\mathbb Z_2 \times
\mathbb Z_2$-graded. By Dade's Theorem, $K_0^\gr (\mathbb H) \cong
K_0 (\mathbb H_0) \cong  K_0 (\mathbb R) \cong \mathbb Z$. The
centre $Z(\mathbb H) = \mathbb R$ is a field and is trivially graded
by $\mathbb Z_2 \times \mathbb Z_2$. So $K_0^\gr
\big( Z( \mathbb H) \big) = K_0^\gr (\mathbb R) \cong \mathbb Z
\oplus \mathbb Z \oplus \mathbb Z\oplus \mathbb Z$.
 We remark that the graded Azumaya algebra $\mathbb H$
does not satisfy the conditions of Theorem~\ref{grazumayafreethm}.
\end{example}

\renewcommand{\refname}{Further reading}

\end{document}